\documentclass[11pt]{amsart}

\usepackage{bfc}

\usepackage{hyperref}

\usepackage[margin=1.25in]{geometry}

\usepackage[utf8]{inputenc}

\usepackage{enumerate}

\begin{document}

\title{Isometries of optimal pseudo-Riemannian metrics}
\author{Brian Clarke}
\date{\today}
\thanks{This research was supported by NSF grant DMS-0902674.}
\address{Department of Mathematics, Stanford University, Stanford, CA
  94305-2125}
\email{bfclarke@stanford.edu}
\urladdr{http://math.stanford.edu/$\sim$bfclarke/}

\begin{abstract}
  We give a concise proof that large classes of optimal (constant
  curvature or Einstein) pseudo-Riemannian metrics are maximally
  symmetric within their conformal class.
\end{abstract}

\maketitle{}

The study of optimal metrics has held a central position in the study
of Riemannian and pseudo-Riemannian geometry.  In this note, we give a
very short alternative proof---for some classes of special cases---of
general results of Hebey--Vaugon \cite{hebey93:_le_yamab} and Ferrand
\cite{Ferrand:1996tb} on the symmetry of optimal Riemannian metrics.
Via the same short proof, we also extend these results to some
pseudo-Riemannian and non-compact cases.  In particular, we show that
many optimal (constant curvature or Einstein) pseudo-Riemannian
metrics are, in a very strong sense, the most symmetric metrics within
their conformal class.  The general result says that if this optimal metric is
(after an appropriate normalization) unique within its conformal
class, then any conformal transformation (and, in particular,
isometry) of a given pseudo-Riemannian metric in that class must be an
isometry of the optimal metric.  We then give some circumstances in
which this general result applies.

Throughout the paper, we assume all manifolds are connected and, for
simplicity, that they are smooth.  They may be open, closed, or with
boundary.  Also for simplicity, we assume all Riemannian metrics are
smooth, though they may be complete or incomplete unless otherwise
mentioned.

\section{General results}
\label{sec:general-statements}

We begin with the following definition.

\begin{definition}\label{dfn:1}
  Let $\mathcal{G}$ be a collection of pseudo-Riemannian metrics on
  a manifold $M$, and let $g_0 \in \mathcal{G}$.  We say $g_0$ is
  \emph{strongly maximally symmetric within $\mathcal{G}$} if, for all
  $g \in \mathcal{G}$ and for each isometry $\varphi \in
  \textnormal{Diff}(M)$ of $g$, $\varphi$ is an isometry of $g_0$.
\end{definition}

Let's denote the isometry group of a metric $g$ by $I_g$.  An
obviously equivalent formulation of the definition is the following.
A metric $g_0$ which is strongly maximally symmetric within
$\mathcal{G}$ has the property that, for any $g \in \mathcal{G}$, $I_g$
is a subgroup of $I_{g_0}$ when both are considered as subgroups of
$\textnormal{Diff}(M)$.

Our main theorem can now be stated in full generality as follows.
Consider the set $\mathcal{M}$ of all Riemannian metrics on $M$, and
let $\mathcal{N} \subseteq \mathcal{M}$ be any
$\textnormal{Diff}(M)$-invariant subset.  (For example, $\mathcal{N}$
could be all metrics with constant scalar curvature $-1$.)  For any
metric $g$ on $M$, we denote by $C_g$ the group of conformal
diffeomorphisms of $(M, g)$, that is, the set of $\varphi \in
\textnormal{Diff}(M)$ with $\varphi^* g = \sigma g$ for some $\sigma
\in C^\infty(M)$.

\begin{theorem}\label{thm:1}
  Let a conformal class $\mathcal{C}$ of pseudo-Riemannian metrics on
  a manifold $M$ be given.  If there is a unique metric $g_0 \in
  \mathcal{C} \cap \mathcal{N}$, then for each $g \in \mathcal{C}$,
  $C_g$ is a subgroup of $I_{g_0}$.  In particular, $g_0$ is strongly
  maximally symmetric within $\mathcal{C}$.
\end{theorem}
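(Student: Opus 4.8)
The plan is to exploit two structural facts and let the uniqueness hypothesis do the real work. The first fact is that the conformal group $C_g$ depends only on the conformal class $\mathcal{C}$ and not on the chosen representative; the second is that $\mathcal{N}$ is invariant under pullback by diffeomorphisms. Together these reduce the entire theorem to the single inclusion $C_{g_0} \subseteq I_{g_0}$, which the uniqueness of $g_0$ within $\mathcal{C} \cap \mathcal{N}$ then delivers directly.

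First I would verify that $C_g = C_{g_0}$ for every $g \in \mathcal{C}$. Writing $g = \lambda g_0$ for a nowhere-vanishing function $\lambda$ and unwinding $\varphi^* g = (\lambda \circ \varphi)\,\varphi^* g_0$, one checks that $\varphi^* g$ is a pointwise multiple of $g$ if and only if $\varphi^* g_0$ is a pointwise multiple of $g_0$; the two conformal factors differ only by the explicit, nowhere-vanishing ratio $(\lambda \circ \varphi)/\lambda$. Hence membership in the conformal group is insensitive to the representative, so $C_g = C_{g_0}$ for all $g \in \mathcal{C}$, and it suffices to prove $C_{g_0} \subseteq I_{g_0}$.

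Next comes the core step. Let $\varphi \in C_{g_0}$, so that $\varphi^* g_0 = \sigma g_0$ for some function $\sigma$. On the one hand, $\varphi^* g_0$ is by construction conformal to $g_0$, so $\varphi^* g_0 \in \mathcal{C}$. On the other hand, $\varphi^* g_0$ is the pullback of $g_0 \in \mathcal{N}$ by a diffeomorphism, so the $\textnormal{Diff}(M)$-invariance of $\mathcal{N}$ gives $\varphi^* g_0 \in \mathcal{N}$. Thus $\varphi^* g_0 \in \mathcal{C} \cap \mathcal{N}$, and the hypothesized uniqueness forces $\varphi^* g_0 = g_0$; equivalently $\sigma \equiv 1$ and $\varphi \in I_{g_0}$. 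This proves $C_{g_0} \subseteq I_{g_0}$, and combined with the previous paragraph, $C_g \subseteq I_{g_0}$ for every $g \in \mathcal{C}$. The ``in particular'' assertion follows at once, since any isometry is a conformal transformation with unit conformal factor, whence $I_g \subseteq C_g \subseteq I_{g_0}$ for all $g \in \mathcal{C}$, which is exactly strong maximal symmetry.

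The argument is short, and the only point requiring genuine care — which I expect to be the main (if modest) obstacle — is the bookkeeping of signs and positivity of conformal factors in the pseudo-Riemannian setting, so that ``conformal to $g_0$'' correctly places $\varphi^* g_0$ back inside $\mathcal{C}$ and so that $\varphi^* g_0$ indeed lands in $\mathcal{N}$. Since $g_0 \in \mathcal{N} \subseteq \mathcal{M}$ and pullback by a diffeomorphism preserves both the signature and the conformal class, these verifications are routine; but they are precisely where one must confirm that nothing in the pseudo-Riemannian case disrupts the reasoning.
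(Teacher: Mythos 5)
Your proposal is correct and takes essentially the same approach as the paper: the paper's proof performs the computation $\varphi^* g_0 = (\varphi^* e^\rho)^{-1} e^\sigma g$ directly for an arbitrary $g = e^\rho g_0 \in \mathcal{C}$, which is precisely your observation that $C_g = C_{g_0}$, and then concludes exactly as you do from the $\textnormal{Diff}(M)$-invariance of $\mathcal{N}$ together with uniqueness of $g_0 \in \mathcal{C} \cap \mathcal{N}$. Your repackaging of that computation as a preliminary reduction to $C_{g_0} \subseteq I_{g_0}$ is only an organizational difference, not a different argument.
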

\begin{proof}
  For any function $\rho \in C^\infty(M)$, let $g := e^\rho g_0 \in
  \mathcal{C}$.  Let $\varphi \in C_g$ be any conformal diffeomorphism
  of $g$, say $\varphi^* g = e^\sigma g$.  Then we have
  \begin{equation}\label{eq:1}
    \varphi^* g_0 = \varphi^* (e^{-\rho} g) = \varphi^*
    (e^{-\rho})(\varphi^* g) = (\varphi^* e^\rho)^{-1} e^\sigma g.
  \end{equation}
  Therefore $\varphi^* g_0$ is conformal to $g_0$.  On the other hand,
  since $\mathcal{N}$ is diffeomorphism-invariant, $\varphi^* g_0 \in
  \mathcal{N}$.  But by assumption, $g_0$ is the only metric in
  $\mathcal{C} \cap \mathcal{N}$, so in fact $\varphi^* g_0 = g_0$,
  i.e., $\varphi \in I_{g_0}$.
\end{proof}

\begin{remark}\label{rmk:1}
  The reader should compare Theorem \ref{thm:1} (and its corollaries
  below) with Ferrand's proof of the Lichnerowicz conjecture
  \cite[Thm.~A]{Ferrand:1996tb} (which was proved by Obata
  \cite{obata72:_rieman} in the compact case for the connected
  component of the identity in $C_g$, and which can be proved via a
  simpler argument for $\dim M = 2$) stating that if $\dim M \geq 3$
  and a Riemannian manifold $(M, g)$ is not conformally equivalent to
  a round sphere or Euclidean space, then $C_g$ is \emph{inessential}.
  That is, it is contained in the isometry group of some metric $g_0$
  in the conformal class of $g$.  We note that these results do not
  explicitly identify the metric $g_0$.

  In particular, one should also compare Theorem \ref{thm:1} to the
  resolution of the equivariant Yamabe problem by Hebey--Vaugon
  \cite{hebey93:_le_yamab}, which uses a different strategy to show
  that for any compact Riemannian manifold $(M, g)$ ($\dim M \geq 3$),
  there is an $I_g$-invariant metric $g_0$ that is conformal to $g$
  and has constant scalar curvature.

  The primary differences between our result and above-mentioned
  results are threefold.  First, our proof is more concise.  Second,
  it is applicable to certain pseudo-Riemannian cases not handled in
  \cite{Ferrand:1996tb,hebey93:_le_yamab} (however, one should compare
  \cite{Frances:2010fm}), and certain noncompact cases not handled in
  \cite{hebey93:_le_yamab}.  Third, our proof is deficient in that it
  only solves the Lichnerowicz conjecture or the equivariant Yamabe
  problem in dimension two (cf.~\S \ref{sec:const-gauss-curv}; the
  latter is also solved in dimension two via our method only when
  assuming $M \neq S^2$), while only partially recovering these
  results in dimension three and higher (cf.~\S
  \ref{sec:const-sect-curv} and \S \ref{sec:einstein-metrics}).
\end{remark}

For any function $\zeta$ on $M$, let $I_\zeta = \{ \varphi \in
\textnormal{Diff}(M) \mid \varphi^* \zeta = \zeta \}$ denote its
isotropy group.

Working along the same lines as the proof of Theorem \ref{thm:1}, we
get the following characterization of the isometries of a metric in
the conformal class of a strongly maximally symmetric metric.

\begin{theorem}\label{thm:2}
  Let a conformal class $\mathcal{C}$ of pseudo-Riemannian metrics
  on a manifold $M$ be given.  If $g_0 \in \mathcal{C}$ is strongly
  maximally symmetric within $\mathcal{C}$ and $g = \rho g_0$ for some
  positive function $\rho$ on M, then $I_g = I_\rho \cap I_{g_0}$.
\end{theorem}
\begin{proof}
  If $\varphi \in I_g$, then by assumption $\varphi \in I_{g_0}$.
  Similarly to (\ref{eq:1}), we have
  \begin{equation*}
    \rho^{-1} g = g_0 = \varphi^* g_0 = (\varphi^* \rho)^{-1} g,
  \end{equation*}
  which implies that $\varphi^* \rho = \rho$.
\end{proof}

\section{Applications}
\label{sec:applications}

Let us now give some examples of subsets $\mathcal{N} \subseteq
\mathcal{M}$ for which there are conformal classes containing a unique
metric in $\mathcal{N}$, so that the theorems of the previous section
apply.  We will give the statements corresponding to Theorem
\ref{thm:1}; the statements for Theorem \ref{thm:2} are of course
analogous.  In this section, we assume $\partial M = \emptyset$.

\subsection{Constant Gaussian curvature}
\label{sec:const-gauss-curv}

Let $M$ be two-dimensional, and let $g$ be any Riemannian metric on
$M$.  By Poincaré uniformization, there exists a metric $g_0$ that is
conformal to $g$ and is complete with constant Gaussian curvature
$\eta$, where $\eta \in \{ +1, 0, -1 \}$.  If $M \cong S^2$ (i.e.,
$\eta = +1$), then $g_0$ is not unique.  If $\eta = 0$, then $g_0$ is
unique only up to homothety if it is isometric to the Euclidean plane.
On the other hand, $(M, g_0)$ is unique if it is isometric to a
cylinder or a torus if we additionally require $\inj(M, g_0) = 1$.  If
$\eta = -1$, then $g_0$ is unique.  If $ \eta = -1$ and $M$ is compact
(i.e., a surface of genus $p \geq 2$), then Hurwitz's Theorem
\cite[p.~258]{farkas92:_rieman_surfac} says that the order of
$I_{g_0}$ is finite and bounded above by $84(p - 1)$.


Thus, we may in this case let $\mathcal{N}$ be the subset of metrics
$g_0$ with constant Gaussian curvature.  If the curvature of $g_0$
is $0$ and $g_0$ is not the Euclidean metric on the plane, then we
additionally require $\inj(M, g_0) = 1$.  We obtain the following
corollary of Theorem \ref{thm:1}.

\begin{corollary}\label{cor:2}
  Let $(M, g)$ be any smooth Riemannian 2-manifold, and let $g_0$ be
  any metric with constant Gaussian curvature conformal to $g$.  If
  $(M, g_0)$ is not isometric to the sphere with its round metric or
  the Euclidean plane, then any conformal diffeomorphism of $g$ is an
  isometry of $g_0$.

  Furthermore, if $M$ is compact with genus $p \geq 2$, then the
  orders of $C_g$ and $I_g$ are finite and bounded above by $84(p -
  1)$.  In particular, $(M, g)$ admits no nontrivial conformal Killing
  fields.
\end{corollary}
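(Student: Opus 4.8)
The plan is to obtain the corollary as a direct application of Theorem \ref{thm:1}, taking for $\mathcal{N}$ the set of constant-Gaussian-curvature Riemannian metrics, normalized exactly as in the discussion preceding the statement. Concretely, I would let $\mathcal{N}$ consist of all Riemannian metrics of constant Gaussian curvature, together with the auxiliary requirement $\inj(M, g_0) = 1$ in the flat, non-Euclidean case, so as to pin down the homothety ambiguity on the cylinder and torus. The first thing to check is that this $\mathcal{N}$ is $\textnormal{Diff}(M)$-invariant: having constant Gaussian curvature equal to a fixed $\eta$, and having injectivity radius $1$, are both preserved under pullback by a diffeomorphism, since curvature and injectivity radius are natural and pullback by a diffeomorphism is an isometry onto its image. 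Thus $\mathcal{N}$ is a legitimate choice in Theorem \ref{thm:1}.

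Next I would invoke Poincaré uniformization to produce a complete constant-curvature representative $g_0$ in the conformal class $\mathcal{C}$ of $g$, and then argue that it is the \emph{unique} element of $\mathcal{C} \cap \mathcal{N}$ precisely under the stated exclusions. This is exactly the content of the preceding paragraph of the paper: uniqueness holds except when $(M, g_0)$ is the round sphere ($\eta = +1$) or the Euclidean plane ($\eta = 0$, where no normalization is available); in the remaining flat cases the injectivity-radius constraint restores uniqueness, and for $\eta = -1$ uniqueness is automatic. Granting this, Theorem \ref{thm:1} yields that $C_g$ is a subgroup of $I_{g_0}$. Since a conformal diffeomorphism of $g$ is by definition an element of $C_g$, the first assertion follows at once.

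For the second part, $M$ compact of genus $p \geq 2$ forces $\eta = -1$, the cleanest uniqueness case, so again $C_g$ is a subgroup of $I_{g_0}$. Hurwitz's Theorem, quoted above, gives $|I_{g_0}| \leq 84(p - 1)$ and in particular makes $I_{g_0}$ finite; the subgroup relation then transfers this bound to $C_g$, and since $I_g$ is a subgroup of $C_g$ (an isometry being a conformal map with conformal factor $1$), also to $I_g$. Finally, for the statement on conformal Killing fields, I would use that compactness makes every conformal Killing field complete, so that its flow is a one-parameter subgroup $\{ \varphi_t \}$ of $C_g$; the continuous homomorphism $t \mapsto \varphi_t$ from the connected group $\mathbb{R}$ into the finite, hence discrete, group $C_g$ must be constant, forcing $\varphi_t \equiv \textnormal{id}$ and therefore the field to vanish.

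The proof is essentially an assembly of results already in hand, so I do not anticipate a serious obstacle; the one point demanding care is the uniqueness bookkeeping that licenses the invocation of Theorem \ref{thm:1}. One must correctly match the exclusions in the hypothesis---the round sphere and the Euclidean plane---to exactly the conformal classes in which the constant-curvature representative fails to be unique, and one must not omit the injectivity-radius normalization in the flat cylinder and torus cases: without it, $\mathcal{C} \cap \mathcal{N}$ contains an entire homothety family and Theorem \ref{thm:1} does not apply.
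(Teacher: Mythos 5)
Your proposal is correct and follows essentially the same route as the paper: it takes $\mathcal{N}$ to be the constant-curvature metrics with the injectivity-radius normalization in the flat non-Euclidean case, uses uniformization plus the uniqueness discussion to verify the hypothesis of Theorem \ref{thm:1}, and then applies Hurwitz's Theorem for the genus $p \geq 2$ bound. Your added details---the explicit check that $\mathcal{N}$ is $\textnormal{Diff}(M)$-invariant and the flow argument showing a conformal Killing field generates a one-parameter subgroup of the finite group $C_g$ and hence vanishes---are exactly the steps the paper leaves implicit.
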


\begin{remark}\label{rmk:2}
  The previous corollary also follows from a result of Calabi
  \cite[Thm.~3]{Calabi:1985ua} (cf.~also
  \cite{lichnerowicz57:_sur,Matsushima:1957uw}).  One can also deduce
  the previous corollary using the fact that isometries of the
  constant-curvature metric on a surface (other than $S^2$ and $\C$)
  agree with biholomorphisms of the complex structure.  Again, the
  advantage of our proof is its simplicity.
\end{remark}

\subsection{Constant scalar curvature}
\label{sec:const-sect-curv}

For the rest of the paper, we consider manifolds $M$ with $\dim M \geq
3$.  If $(M, g)$ is a compact Riemannian manifold, then by
Trudinger, Aubin, and Schoen's resolution of the Yamabe problem
\cite{Trudinger:1968vh,Aubin:1976ul,Schoen:1984wk}, there exists a
metric $g_0$ conformal to $g$ of constant scalar curvature.

In general, the metric $g_0$ need not be the unique (even up to
homothety) metric conformal to $g$ of constant scalar curvature, so
that Theorem \ref{thm:1} does not necessarily apply.  However, if the
scalar curvature of $g_0$ is nonpositive, then a maximum principle
argument implies that $g_0$ is, in fact, unique up to homothety
\cite[Thm.~4.2]{Kazdan:1975up}.  Therefore, letting $\mathcal{N}$ be
the subset of metrics on $M$ with unit volume and constant scalar
curvature, we obtain:

\begin{corollary}\label{cor:3}
  Let $(M, g)$ be a smooth, compact Riemannian manifold with
  nonpositive Yamabe invariant.  Then any conformal diffeomorphism of
  $g$ is an isometry of the unique metric $g_0$ conformal to $g$ with
  constant scalar curvature and unit volume.
\end{corollary}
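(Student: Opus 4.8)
The plan is to realize this as a direct application of Theorem~\ref{thm:1}, taking $\mathcal{N}$ to be the set of all Riemannian metrics on $M$ of constant scalar curvature and unit volume. This set is manifestly $\textnormal{Diff}(M)$-invariant, since pullback by a diffeomorphism preserves the scalar curvature pointwise (hence constancy) and preserves the total volume. Consequently, the only thing left to verify is the hypothesis of Theorem~\ref{thm:1}, namely that the conformal class $\mathcal{C}$ of $g$ contains a \emph{unique} element of $\mathcal{N}$.

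First I would invoke the resolution of the Yamabe problem \cite{Trudinger:1968vh,Aubin:1976ul,Schoen:1984wk} to produce a metric $g_1 \in \mathcal{C}$ of constant scalar curvature; rescaling $g_1$ by a positive constant so that its volume equals $1$ yields a metric $g_0 \in \mathcal{C} \cap \mathcal{N}$, which establishes existence. The sign of the constant scalar curvature of any metric in $\mathcal{C} \cap \mathcal{N}$ agrees with the sign of the Yamabe invariant of $(M, g)$; since the latter is assumed nonpositive, the scalar curvature of such a metric is nonpositive as well.

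The key step is uniqueness, and here I would appeal to the maximum principle argument of Kazdan--Warner \cite[Thm.~4.2]{Kazdan:1975up}: once the constant scalar curvature is known to be nonpositive, any two metrics in $\mathcal{C}$ of constant scalar curvature are homothetic. Writing a second element of $\mathcal{C} \cap \mathcal{N}$ as $\tilde{g} = c\, g_0$ for a positive constant $c$ and imposing the unit-volume condition gives $c^{n/2}\, \textnormal{vol}(g_0) = \textnormal{vol}(g_0)$, where $n = \dim M$; this forces $c = 1$ and hence $\tilde{g} = g_0$. Thus $g_0$ is the unique element of $\mathcal{C} \cap \mathcal{N}$.

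With uniqueness established, Theorem~\ref{thm:1} applies verbatim and shows that $C_g$ is a subgroup of $I_{g_0}$; in particular every conformal diffeomorphism of $g$, and a fortiori every isometry, is an isometry of $g_0$. The main obstacle in this argument is not structural but analytic, and it resides entirely in the uniqueness-up-to-homothety statement, which is precisely where the sign hypothesis on the Yamabe invariant is indispensable: without it, a conformal class may contain several non-homothetic constant-scalar-curvature metrics (as for the round sphere), and Theorem~\ref{thm:1} would no longer apply.
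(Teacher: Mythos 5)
Your proposal is correct and follows essentially the same route as the paper: existence from the resolution of the Yamabe problem, uniqueness up to homothety from the Kazdan--Warner maximum principle argument \cite[Thm.~4.2]{Kazdan:1975up} under the nonpositivity hypothesis, the unit-volume normalization to pin down the homothety constant, and then a direct application of Theorem~\ref{thm:1} with $\mathcal{N}$ the diffeomorphism-invariant set of unit-volume constant-scalar-curvature metrics. You merely make explicit a few details (the rescaling step and the computation $c^{n/2} = 1$) that the paper leaves implicit.
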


\subsection{Einstein metrics}
\label{sec:einstein-metrics}

We now allow the case where $(M, g)$ is pseudo-Riemannian, and not
just Riemannian.  Using a result of Kühnel and Rademacher
\cite[Thm.~$1^*$]{Kuhnel:1995up}, we can show the following.  We
assume that the number of negative eigenvalues in the signature of $g$
is no greater than $\frac{n}{2}$.  The results of this subsection apply
even if $g$ is only of regularity $C^3$.

If there is an Einstein metric $g_0$ in the conformal class of $g$,
then this metric is unique up to homothety, except in the 
following two cases:
\begin{enumerate}[\ \ \ \ \ (1)]
\item $(M, g_0)$ is a simply-connected Riemannian space of constant
  sectional curvature.
\item $(M, g_0)$ is a warped-product manifold $\R \times_{e^{2t}} N$,
  where $N$ is a complete Ricci-flat $(n-1)$-dimensional Riemannian
  manifold ($n = \dim M$).  Explicitly, the metric is given by $\pm 
  d t^2 + e^{2t} h$, where $h$ is the metric on $N$ and the sign of $d 
  t^2$ depends on the signature of $g_0$.
\end{enumerate}
If neither of the above cases holds, we can choose the Einstein metric
$g_0$ uniquely (i.e., fix a scale) in some circumstances, e.g., the
following:
\begin{enumerate}[\ \ \ \ \ (a)]
\item If the scalar curvature of $g_0$ is positive or negative, scale
  so that it is $\pm 1$.
\item If $\Vol(M, g_0) < \infty$, scale to unit volume.
\item If $0 < \inj(M, g_0) < \infty$, scale so that $\inj(M, g_0) = 1$.
\end{enumerate}
Thus, letting $\mathcal{N}$ be the set of such metrics, we have the
following:

\begin{corollary}\label{cor:4}
  Let $(M, g)$ be a pseudo-Riemannian manifold of regularity at least
  $C^3$, with $\dim M \geq 3$.  Assume that $g$ has no greater than
  $\frac{n}{2}$ negative eigenvalues.  If $(M, g)$ is conformally
  equivalent to a (geodesically) complete Einstein manifold $(M,
  g_0)$, for which neither case (1) nor (2) above holds, and for which
  either (a), (b), or (c) applies, then every conformal diffeomorphism
  of $g$ is an isometry of $g_0$.
\end{corollary}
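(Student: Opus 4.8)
The plan is to apply Theorem \ref{thm:1} directly, so the entire task reduces to verifying that the hypotheses of that theorem are met—namely, that $g_0$ is the \emph{unique} metric lying in $\mathcal{C} \cap \mathcal{N}$, where $\mathcal{C}$ is the conformal class of $g$ and $\mathcal{N}$ is the $\textnormal{Diff}(M)$-invariant set of Einstein metrics normalized by whichever of (a), (b), (c) applies. Once uniqueness is established, Theorem \ref{thm:1} immediately yields that $C_g$ (and hence the isometry group of any metric in the class) is a subgroup of $I_{g_0}$, which is precisely the desired conclusion.

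First I would invoke the cited result of K\"uhnel--Rademacher \cite[Thm.~$1^*$]{Kuhnel:1995up} to obtain uniqueness \emph{up to homothety} of the Einstein metric within the conformal class, under the standing assumption that $g$ has at most $\frac{n}{2}$ negative eigenvalues and that neither exceptional case (1) nor (2) holds. This is the substantive input: it says that any two Einstein metrics conformal to $g$ differ by a constant scaling factor. The next step is to promote ``unique up to homothety'' to ``unique'' by fixing the scale, and here I would check case by case that each of the normalizations (a)--(c) is a genuine homothety invariant that pins down the constant. For (a), the scalar curvature of a homothetically rescaled Einstein metric scales by the inverse of the scaling factor (indeed $\textnormal{Scal}_{cg_0} = c^{-1}\textnormal{Scal}_{g_0}$), so requiring $\textnormal{Scal} = \pm 1$ singles out a unique representative precisely when the scalar curvature is nonzero. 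For (b), volume scales by a positive power of the constant, so unit volume fixes the scale whenever the volume is finite and nonzero. For (c), the injectivity radius scales linearly, so $\inj = 1$ fixes it whenever the injectivity radius is finite and positive.

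I would then verify that $\mathcal{N}$, the collection of Einstein metrics satisfying the chosen normalization, is $\textnormal{Diff}(M)$-invariant, as required for the application of Theorem \ref{thm:1}. This is routine but worth recording: being Einstein is a diffeomorphism-invariant condition since pullback commutes with the Ricci tensor, and each of the normalizing quantities (scalar curvature value, total volume, injectivity radius) is itself preserved under pullback by a diffeomorphism. Hence if $g_0 \in \mathcal{N}$ then $\varphi^* g_0 \in \mathcal{N}$ for every $\varphi \in \textnormal{Diff}(M)$.

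With these facts in hand the conclusion is immediate: the normalized Einstein metric $g_0$ is the unique element of $\mathcal{C} \cap \mathcal{N}$, so Theorem \ref{thm:1} applies verbatim and every conformal diffeomorphism of $g$ is an isometry of $g_0$. I expect the main obstacle to be purely bookkeeping rather than conceptual—namely, confirming that the exclusion of cases (1) and (2) is exactly what is needed for the K\"uhnel--Rademacher uniqueness to hold, and that in each excluded-from-exclusion scenario at least one of the normalizations (a)--(c) is actually available (for instance, a noncompact complete Einstein metric of zero scalar curvature would force reliance on (c), and one must ensure its injectivity radius is finite and positive). The genuinely hard analytic work is entirely contained in the cited theorem of K\"uhnel--Rademacher; the contribution here is to package its hypotheses cleanly so that the soft argument of Theorem \ref{thm:1} can be invoked.
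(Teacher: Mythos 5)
Your proposal is correct and follows exactly the paper's (implicit) argument: the paper presents Corollary \ref{cor:4} as an immediate consequence of the preceding discussion, which invokes K\"uhnel--Rademacher \cite[Thm.~$1^*$]{Kuhnel:1995up} for uniqueness up to homothety outside cases (1) and (2), fixes the scale via (a)--(c), and takes $\mathcal{N}$ to be the resulting $\textnormal{Diff}(M)$-invariant set so that Theorem \ref{thm:1} applies. Your additional verifications (the scaling behavior of scalar curvature, volume, and injectivity radius under homothety, and the diffeomorphism-invariance of the Einstein condition and the normalizations) are exactly the bookkeeping the paper leaves to the reader.
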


\subsection*{Acknowledgements}
\label{sec:acknowledgements}

I would like to thank to Jürgen Jost for discussing the case of
Riemann surfaces and Jacob Bernstein for suggesting that the argument
of Theorem \ref{thm:1} might be more generally applicable.  I would
also like to thank Yanir Rubinstein for pointing out
\cite{Calabi:1985ua,lichnerowicz57:_sur,Matsushima:1957uw}, as well as Karin
Melnick for several helpful comments.

\bibliography{papers,main}
\bibliographystyle{amsalpha}

\end{document}